\newtheorem{thm}{Theorem}[section]
\newtheorem{prop}[thm]{Proposition}
\newtheorem{lem}[thm]{Lemma}
\theoremstyle{definition}
\newtheorem{dfn}[thm]{Definition}
\newtheorem{rmk}[thm]{Remark}
\numberwithin{equation}{section}
\numberwithin{equation}{section}
\newcommand{\cB}{\mathcal{B}}
\newcommand{\minotimes}{\otimes_{{\rm min}}}
\newcommand{\cA}{\mathcal{A}}
\newcommand{\op}{{\rm op}}
\newcommand{\RH}{H_{\mathbb{R}}}
\title[On the isomorphism class of $q$-Gaussian C$^\ast$-algebras for infinite variables]{On the isomorphism class of $q$-Gaussian C$^\ast$-algebras for infinite variables}
\date{\noindent \today.  \\
 {\it MSC2010}: 46L35, 46L06.   {\it Keywords}: $q$-Gaussian C$^\ast$-algebras, Akemann-Ostrand property.\\
  MC is supported by the NWO Vidi grant VI.Vidi.192.018 `Non-commutative harmonic analysis and rigidity of operator algebras'. MK is supported by the NWO project 613.009.125
`The structure of Hecke-von Neumann algebras'. MW was supported by the European Research Council Starting Grant 677120 INDEX.
 }
\author[Borst]{Matthijs Borst}
\author[Caspers]{Martijn Caspers}
\author[Klisse]{Mario Klisse}
\address{Matthijs Borst, Martijn Caspers, Mario Klisse, TU Delft, EWI/DIAM,
	P.O.Box 5031,
	2600 GA Delft,
	The Netherlands}
\email{M.J.Borst@tudelft.nl}
\email{M.P.T.Caspers@tudelft.nl}
\email{M.Klisse@tudelft.nl}
\author[Wasilewski]{Mateusz Wasilewski}
\address{Mateusz Wasilewski,  IMPAN, \'Sniadeckich 8, 00-656 Warsaw, Poland}
\email{mwasilewski@impan.pl}
\begin{document}

\maketitle

\begin{abstract}
For a real Hilbert space $\RH$ and $-1 < q < 1$ Bozejko and Speicher introduced the C$^\ast$-algebra $A_q(\RH)$ and von Neumann algebra $M_q(\RH)$ of $q$-Gaussian variables. We prove that if $\dim(\RH) = \infty$ and $-1 < q < 1, q \not = 0$ then  $M_q(\RH)$ does not have the Akemann-Ostrand property with respect to $A_q(\RH)$. It follows that $A_q(\RH)$ is not isomorphic to $A_0(\RH)$. This gives an answer to the C$^\ast$-algebraic part  of Question 1.1 and Question 1.2   in \cite{NelsonZeng}.
\end{abstract}

\section{Introduction}

In \cite{BS} Bo\.{z}ejko and Speicher introduced a non-commutative version of Brownian motion using a construction that is now commonly known as the $q$-Gaussian algebra where $-1 \leq q \leq 1$. These algebras range between the extreme Bosonic case $q = 1$ of  fields of classical Gaussian random variables and the Fermionic case $q = -1$ of Clifford algebras. For $q = 0$ one obtains Voiculescu's free Gaussian functor. $q$-Gaussians can be studied on the level of $\ast$-algebras $\cA_q(\RH)$, C$^\ast$-algebras $A_q(\RH)$ and von Neumann algebras $M_q(\RH)$ starting from a real Hilbert space $\RH$ where $\dim(\RH)$ usually refers to the number of variables.

 The dependence of $q$-Gaussian algebras on the parameter $q$ has been an intriguing problem ever since their introduction. The $\ast$-algebras $\cA_q(\RH)$ are easily seen to be isomorphic for all $-1 < q < 1$ (see \cite[Theorem 4.1, proof]{CIW}).  However, the isomorphisms do not extend to the C$^\ast$-algebras $A_q(\RH)$; one way to see this is that this isomorphism maps generators $W_q(\xi)$ to generators $W_{q'}(\xi)$ with $\xi \in \RH$ (see Preliminaries for notation) which is easily seen to be non-isometric.   In fact, the isomorphism problem becomes notoriously difficult on the level of the C$^\ast$-algebras and von Neumann algebras.

A breakthrough result was obtained by Guionnet-Shlyakhtenko in \cite{GS} where free transport techniques were developed to show that in case $\dim(\RH) < \infty$ one has that  $A_q(\RH) \simeq A_0(\RH)$ and $M_q(\RH) \simeq M_0(\RH)$ for a range of $q$ close to 0. The range becomes smaller as $\dim(\RH)$ increases. The proof is also based on the existence and power series estimates of conjugate variables by Dabrowski \cite{Dabrowski}.

The infinite variable case $\dim(\RH) = \infty$ was then pursued by Nelson-Zeng \cite{NelsonZeng} where they explicitly ask whether given a fixed Hilbert space $\RH$ one can have isomorphism of the $q$-Gaussian C$^\ast$- and von Neumann algebras, see \cite[Questions 1.1 and 1.2]{NelsonZeng}. They already note that the condition $q^2 \dim(\RH) < 1$ is required for the construction of conjugate variables to the free difference quotient \cite{Dabrowski}. However, by passing to mixed $q$-Gaussians with sufficient decay on the coefficient array $Q = (q_{ij})_{i,j}$ they show that free transport techniques can still be developed in order to extend the Guionnet-Shlyakhtenko result to this mixed $q$-Gaussian setting. This approach is in some sense sufficiently close to the case of finite dimensional $\RH$. The main merit of the current note is a rather definite and negative answer to the C$^\ast$-algebraic part of \cite[Questions 1.1 and 1.2]{NelsonZeng}, namely we show that we have $A_0(\RH) \not \simeq A_q(\RH), -1 < q < 1, q \not  = 0$ in case the dimension of $\RH$ is infinite.

\vspace{0.3cm}

Our main result is that if $\dim(\RH) = \infty$ then the von Neumann algebra $M_q(\RH)$ does not have the Akemann-Ostrand property with respect to the natural C$^\ast$-subalgebra $A_q(\RH)$ for any $-1 < q < 1, q \not = 0$. This will then distinguish $A_0(\RH)$ from $A_q(\RH)$. The idea of our proof is as follows. In \cite[Theorem 5.1]{Connes} Connes proved that a finite von Neumann algebra $M$ is amenable if and only if the map
\[
M \otimes_{{\rm alg}} M^{\op} \rightarrow \cB(L_2(M)): a \otimes b^{\op} \rightarrow a b^{\op},
\]
is $\minotimes$-bounded. This characterisation -- in combination with a Khintchine inequality -- was used by Nou \cite{Nou} to show that $M_q(\RH)$ is not amenable for $-1 < q < 1$ and $\dim(\RH) \geq 2$.  We show that if $\dim(\RH) = \infty$ and $-1 < q < 1, q \not = 0$ then we cannot even have that
\[
A_q(\RH) \otimes_{{\rm alg}} A_q(\RH)^{\op} \rightarrow \cB(L_2(M_q(\RH))) \slash \mathcal{K}(L_2(M_q(\RH))): a \otimes b^{\op} \rightarrow a b^{\op} + \mathcal{K}(L_2(M_q(\RH))),
\]
is $\minotimes$-bounded where we have taken a quotient by compact operators. This is proved in Section \ref{Sect=AO}. We then harvest the non-isomorphism results in Section \ref{Sect=NonIso}.

\vspace{0.3cm}

\noindent {\bf Acknowledgements:} MK wishes to thank IMPAN where part of this research was carried out during a research visit.

\section{Preliminaries}

\subsection{Von Neumann algebras} In the following $\cB(H)$ denotes the bounded operators on a Hilbert space $H$ and $\mathcal{K}(H)$ denotes the compact operators on $H$.  For a von Neumann algebra $M$ we denote by $(M, L_2(M), J, L_2(M)^+)$ the standard form. For $x \in M$ we write $x^{\op} := J x^\ast J$ which is the right multiplication with $x$ on the standard space. This way $L_2(M)$ becomes an $M$-$M$-bimodule called the trivial bimodule.

The algebraic tensor product is denoted by $\otimes_{{\rm alg}}$   and $\minotimes$ is the minimal tensor product of C$^\ast$-algebras which by   Takesaki's theorem \cite[Theorem IV.4.19]{Takesaki}  is the spatial tensor product.

\subsection{$q$-Gaussians}

Let $-1 < q < 1$.  Now let $\RH$ be a real  Hilbert space with complexification $H := \RH \oplus i \RH$. We define the symmetrization operator $P_q^k$ on $H^{\otimes k}$ by
\begin{equation} \label{Eqn=Pq}
P_q^k(\xi_1 \otimes \ldots \otimes \xi_n) = \sum_{\sigma \in S_k} q^{i(\sigma)} \xi_{\sigma(1)} \otimes \ldots \otimes \xi_{\sigma(n)},
\end{equation}
where $S_k$ is the symmetric group of permutations of $k$ elements and $i(\sigma) := \# \{ (a,b) \mid a < b, \sigma(b) < \sigma(a) \}$ the number of inversions. The operator $P_q^k$ is positive and  invertible \cite{BS}. Define a new inner product on $H^{\otimes k}$ by
\[
\langle \xi, \eta \rangle_q := \langle P_q^k \xi, \eta \rangle,
\]
and call the new Hilbert space $H_q^{\otimes k}$. Set the Hilbert space $F_q(H) := \mathbb{C} \Omega \oplus (\oplus_{k=1}^\infty H_q^{\otimes k})$ where $\Omega$ is a unit vector called the vacuum vector. For $\xi \in H$ let
\[
l_q(\xi) ( \eta_1 \otimes \ldots \otimes \eta_k) := \xi \otimes \eta_1 \otimes \ldots \otimes \eta_k, \qquad l_q(\xi) \Omega = \xi,
\]
and then $l_q^\ast(\xi) = l_q(\xi)^\ast$. These `creation'  and `annihilation' operators are bounded and extend to $F_q(H)$. We define a $\ast$-algebra, C$^\ast$-algebra and von Neumann algebra by
\[
\cA_q(\RH) := \ast{\rm -alg} \{ l_q(\xi) + l_q^\ast(\xi) \mid \xi \in \RH \}, \quad A_q(\RH) := \overline{ \cA_q(\RH) }^{\Vert \cdot  \Vert}, \quad M_q(\RH)  := A_q(\RH)'',
\]
where $\ast$-alg denotes the unital $\ast$-algebra in $\cB(F_q(H))$ generated by the set.   Then $\tau_\Omega(x) := \langle x \Omega, \Omega \rangle$ is a faithful tracial state on $M_q(\RH)$ which is moreover normal. Now $F_q(H)$ is the standard form Hilbert space of $M_q(\RH)$ and $J x \Omega = x^\ast \Omega$.

For $K_{\mathbb{R}}$ a closed subspace of $\RH$ we have that $\cA_q(K_{\mathbb{R}})$ is naturally a $\ast$-subalgebra of  $\cA_q(\RH)$. Further, if $(K_{\mathbb{R},i})_{ i \in \mathbb{N}}$ is an increasing sequence of closed subspaces whose span is dense in  $\RH$ then  $\cup_i \cA_q(K_{\mathbb{R},i})$ is dense in $A_q(\RH)$.

For vectors $\xi_1, \ldots, \xi_k \in H$ there exists a unique operator $W_q(\xi_1 \otimes \ldots \otimes \xi_k) \in \cA_q(\RH)$ such that
\[
W_q(\xi_1 \otimes \ldots \otimes \xi_k) \Omega = \xi_1 \otimes \ldots \otimes \xi_k.
\]
These operators are called Wick operators. It follows that $W_q(\xi)^\op \Omega = \xi$.
We shall further need the constant
\begin{equation}\label{Eqn=Cq}
C_q := \prod_{i=1}^\infty (1 - q^i)^{-1} > 0.
\end{equation}

\section{Main theorem: failure of the Akemann-Ostrand property}\label{Sect=AO}

\subsection{Failure of AO}
We will work with the following definition of the Akemann-Ostrand property \cite{BrownOzawa}.

\begin{dfn}\label{Dfn=AO}
A finite von Neumann algebra $M$ has the Akemann-Ostrand property (or AO) if there exists a $\sigma$-weakly dense unital C$^\ast$-subalgebra $A \subseteq M$ such that $A$ is locally reflexive (see \cite{BrownOzawa}) and such that the multiplication map $\theta: A \otimes_{{\rm alg}} A^{\op} \rightarrow \cB(L_2(M))\slash \mathcal{K}(L_2(M)): a \otimes b^\op \rightarrow a b^\op + \mathcal{K}(L_2(M))$ is continuous with respect to the minimal tensor norm.
We also say that $M$ has AO  with respect to $A$.
\end{dfn}

We assumed local reflexivity of the C$^\ast$-algebra $A$ in Definition \ref{Dfn=AO} as part of the usual definition of AO. However, in the current  paper local reflexivity does not play a crucial role  and all our results hold if we consider Definition  \ref{Dfn=AO} without the  local reflexivity assumption on $A$.

 Note that $\theta$ in Definition \ref{Dfn=AO} is a $\ast$-homomorphism, so if it is continuous it is automatically a contraction.

\begin{thm} \label{Thm=AntiAO}
Let $M$ be a finite von Neumann algebra  with a $\sigma$-weakly dense unital C$^\ast$-subalgebra $A$.
Suppose there exists a unital C$^\ast$-subalgebra $B \subseteq A$ and infinitely many mutually orthogonal closed subspaces $H_i \subseteq L_2(M), i \in \mathbb{N}$ that are left and right $B$-invariant. Suppose moreover that there exists $\delta > 0$ and finitely many operators $b_j, c_j \in B$ such that for every $i \in \mathbb{N}$ we have
\begin{equation}\label{Eqn=NonContraction}
\Vert \sum_{j} b_j c_j^{op} \Vert_{\cB(H_i)}  \geq (1 + \delta)  \Vert \sum_j b_j \otimes c_j^{\op} \Vert_{B \minotimes B^{\op}}.
\end{equation}
Then $M$ does not have AO with respect to $A$.
\end{thm}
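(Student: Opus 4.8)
The plan is to argue by contradiction. Suppose $M$ has AO with respect to $A$, so that the multiplication map $\theta$ of Definition \ref{Dfn=AO} is continuous for the minimal tensor norm and hence, being a $\ast$-homomorphism, a contraction. Set $x := \sum_j b_j \otimes c_j^{\op} \in B \otimes_{{\rm alg}} B^{\op}$, viewed inside $A \otimes_{{\rm alg}} A^{\op}$, and $T := \sum_j b_j c_j^{\op} \in \cB(L_2(M))$, so that $\theta(x) = T + \mathcal{K}(L_2(M))$. I would first record that, by functoriality (injectivity) of the minimal tensor norm under the inclusion $B \hookrightarrow A$, we have $\|x\|_{A \minotimes A^{\op}} = \|x\|_{B \minotimes B^{\op}}$. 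Since the norm of $\theta(x)$ in the Calkin algebra is exactly the essential norm $\|T\|_{{\rm ess}} = \inf_{K \in \mathcal{K}(L_2(M))} \|T - K\|$, contractivity of $\theta$ yields the upper bound
\[
\|T\|_{{\rm ess}} \leq \|x\|_{B \minotimes B^{\op}}.
\]

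The heart of the argument is a matching lower bound on $\|T\|_{{\rm ess}}$ coming from the subspaces $H_i$. Let $P_i \in \cB(L_2(M))$ be the orthogonal projection onto $H_i$. As $H_i$ is left and right $B$-invariant, it is invariant under each $b_j$ and each $c_j^{\op}$, hence $T H_i \subseteq H_i$; consequently $P_i T P_i$ acts on $H_i$ as $T|_{H_i}$ and vanishes on $H_i^\perp$, so $\|P_i T P_i\| = \|T|_{H_i}\|_{\cB(H_i)} \geq (1+\delta)\,\|x\|_{B \minotimes B^{\op}}$ by \eqref{Eqn=NonContraction}. The decisive point is that \emph{compact operators are asymptotically invisible along the sequence $(H_i)$}: since the $H_i$ are mutually orthogonal one has $\sum_i \|P_i \xi\|^2 \leq \|\xi\|^2$ for every $\xi$, so $P_i \to 0$ strongly, and a routine finite-rank approximation then gives $\|P_i K P_i\| \to 0$ for every $K \in \mathcal{K}(L_2(M))$. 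Hence, for any compact $K$,
\[
\|T - K\| \geq \|P_i (T - K) P_i\| \geq \|P_i T P_i\| - \|P_i K P_i\| \geq (1+\delta)\,\|x\|_{B \minotimes B^{\op}} - \|P_i K P_i\|,
\]
and letting $i \to \infty$ and then taking the infimum over $K$ produces $\|T\|_{{\rm ess}} \geq (1+\delta)\,\|x\|_{B \minotimes B^{\op}}$.

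Combining the two bounds gives $(1+\delta)\|x\|_{B \minotimes B^{\op}} \leq \|x\|_{B \minotimes B^{\op}}$, which is impossible once $\|x\|_{B \minotimes B^{\op}} > 0$ (and if this norm were zero then $T = 0$ and \eqref{Eqn=NonContraction} is vacuous, so there is nothing to prove). This contradiction shows that $M$ does not have AO with respect to $A$.

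I expect the only genuinely technical step to be the claim $\|P_i K P_i\| \to 0$ for compact $K$; I would prove it by approximating $K$ in operator norm by finite-rank operators $F$, for which $\|P_i F P_i\| \to 0$ is immediate from $\|P_i \xi\| \to 0$ applied to the finitely many vectors defining $F$, and then an $\varepsilon/2$ estimate transfers this to $K$. The remaining ingredients — injectivity of the minimal tensor norm and the identification $\|P_i T P_i\| = \|T|_{H_i}\|$ for the invariant subspace $H_i$ — are standard.
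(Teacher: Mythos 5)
Your proof is correct and follows essentially the same route as the paper's: the paper's one-line justification that finite-rank (hence compact) perturbations cannot destroy the lower bound coming from the infinitely many orthogonal $B$-$B$-invariant subspaces is exactly what you make precise via the projections $P_i \to 0$ strongly and finite-rank approximation, and the rest (contractivity of $\theta$ plus injectivity of the minimal tensor norm to pass from $A \minotimes A^{\op}$ to $B \minotimes B^{\op}$) matches the paper's argument, which leaves the latter step implicit.
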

\begin{proof}
Since there are infinitely many $B$-$B$-invariant spaces $H_i$ we have for any finite rank operator $x \in \cB(L_2(M))$ that
\[
\Vert \sum_{j} b_j c_j^{op} + x \Vert_{\cB(L_2(M) )  }  \geq (1 + \delta)  \Vert \sum_j b_j \otimes c_j^{\op} \Vert_{B \minotimes B^{\op}}.
\]
Taking the infimum over all such $x$ we obtain that
\begin{equation}\label{Eqn=Estimate1}
\Vert \sum_{j} b_j c_j^{op} +  \mathcal{K}(L_2(M)  )  \Vert_{\cB(L_2(M) ) \slash \mathcal{K}(L_2(M)  )  }  \geq (1 + \delta)  \Vert \sum_j b_j \otimes c_j^{\op} \Vert_{B \minotimes B^{\op}}.
\end{equation}
But the definition of AO entails the existence of a contraction $\theta: A \minotimes A^{\op} \rightarrow \cB( L_2(M)) \slash  \mathcal{K}(L_2(M) )$ such that $\theta(b \otimes c^{\op}) = b c^{\op} +  \mathcal{K}(L_2(M) )$   for all $b,c \in A$. Hence
\[
\Vert \sum_{j} b_j c_j^{op} +  \mathcal{K}(L_2(M)  )  \Vert_{\cB(L_2(M) ) \slash \mathcal{K}(L_2(M) )   }
\leq
  \Vert \sum_j b_j \otimes c_j^{\op} \Vert_{B \minotimes B^{\op}},
\]
which contradicts \eqref{Eqn=Estimate1}.
\end{proof}

\subsection{The case of $q$-Gaussians}

\begin{thm}\label{Thm=AntiAOGaussian}
Assume $\dim(\RH) = \infty$ and $-1 < q < 1, q \not = 0$. Then the von Neumann algebra $M_q(\RH)$ does not have AO with respect to $A_q(\RH)$.
\end{thm}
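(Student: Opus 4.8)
The plan is to deduce the statement from Theorem \ref{Thm=AntiAO}, applied with $M = M_q(\RH)$ and $A = A_q(\RH)$ (which is $\sigma$-weakly dense in $M$ by definition). Since $\dim(\RH) = \infty$, I may fix, for an integer $d$ to be chosen at the very end, an orthonormal system $e_1, \ldots, e_d, \xi_1, \xi_2, \ldots$ in $\RH$, and set $\RK := \spa\{e_1, \ldots, e_d\}$ and $B := A_q(\RK)$, a unital C$^\ast$-subalgebra of $A$. Writing $s_j := l_q(e_j) + l_q^\ast(e_j) = W_q(e_j) \in B$, the fixed operators required by the theorem will be $b_j = c_j = s_j$ for $j = 1, \ldots, d$, so that $\sum_j b_j c_j^{\op} = \sum_j s_j s_j^{\op}$ and $\sum_j b_j \otimes c_j^{\op} = \sum_j s_j \otimes s_j^{\op}$.

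For the invariant subspaces I would take $H_i \subseteq F_q(H)$ to be the closed $B$-$B$-subbimodule generated by the one-particle vector $\xi_i$; it is left and right $B$-invariant by construction and contains $\xi_i$. Because each $s_j$ is a sum of a creation and an annihilation operator in the direction $e_j \perp \xi_i$, every vector produced by acting on $\xi_i$ from the left and right with polynomials in the $s_j$ is a linear combination of simple tensors each containing exactly one factor equal to $\xi_i$, all remaining factors lying in the complexification of $\RK$. Since $\langle \xi_i, \xi_{i'} \rangle = 0 = \langle \xi_i, e_j \rangle$ for $i \neq i'$, every term in the permutation expansion of the $q$-inner product $\langle \cdot, \cdot \rangle_q$ of a tensor from $H_i$ against one from $H_{i'}$ contains a vanishing factor; hence $H_i \perp H_{i'}$, and the $H_i$ furnish infinitely many mutually orthogonal $B$-$B$-invariant subspaces, as required.

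The core of the argument is the two-sided estimate \eqref{Eqn=NonContraction}. For the lower bound I would compute directly on the Fock space: since $\xi_i \perp e_j$ one has $s_j^{\op} \xi_i = \xi_i \otimes e_j$ and then $s_j s_j^{\op} \xi_i = e_j \otimes \xi_i \otimes e_j + q\, \xi_i$, whence $\langle \sum_j s_j s_j^{\op} \xi_i, \xi_i \rangle_q = d q$. As $T := \sum_j s_j s_j^{\op}$ is self-adjoint and leaves the reducing subspace $H_i$ invariant, and $\Vert \xi_i \Vert_q = 1$, this yields $\Vert \sum_j s_j s_j^{\op} \Vert_{\cB(H_i)} \geq d |q|$ for every $i$. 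For the upper bound I would invoke the operator-valued Haagerup/Khintchine inequalities for $q$-Gaussian creation operators (in the spirit of Bo\.{z}ejko and of Nou): expanding $s_j$ into its creation and annihilation parts and applying to each resulting sum the row and column bounds of the form $\Vert \sum_j l_q(e_j) \otimes x_j \Vert \leq \Vert \sum_j l_q(e_j) l_q(e_j)^\ast \Vert^{1/2} \Vert \sum_j x_j^\ast x_j \Vert^{1/2}$, together with the fact that $\Vert \sum_{j=1}^d l_q(e_j) l_q(e_j)^\ast \Vert$ is bounded uniformly in $d$ by a constant depending only on $q$ (one may take $(1-|q|)^{-1}$), I obtain a constant $C(q)$, independent of $d$, with $\Vert \sum_j s_j \otimes s_j^{\op} \Vert_{B \minotimes B^{\op}} \leq C(q)\, \Vert \sum_{j=1}^d s_j^2 \Vert^{1/2} \leq C(q) \sqrt{d}\, \Vert s_1 \Vert$.

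Combining the two bounds, the ratio of the left-hand side of \eqref{Eqn=NonContraction} to its right-hand side is at least $d|q| / (C(q) \sqrt{d}\, \Vert s_1 \Vert) = |q|\sqrt{d}/(C(q)\Vert s_1 \Vert)$, which tends to $\infty$ with $d$. Hence I may fix $d = d(q)$ so large that this ratio exceeds $1 + \delta$ for some $\delta > 0$, and Theorem \ref{Thm=AntiAO} then finishes the proof. I expect the main obstacle to be the upper bound on the minimal tensor norm: one must guarantee that the Khintchine-type constant is genuinely uniform in $d$, so that the norm grows only like $\sqrt{d}$ while the bimodule norm grows linearly in $d$. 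It is precisely here, through the diagonal value $dq$, that the hypothesis $q \neq 0$ is used — in accordance with the fact that $M_0(\RH)$ does have the Akemann--Ostrand property.
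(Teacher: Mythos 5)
Your proposal is correct, and it uses the same framework as the paper --- Theorem \ref{Thm=AntiAO} applied with $B$ the $q$-Gaussian C$^\ast$-algebra of a finite-dimensional subspace and the mutually orthogonal bimodules $H_i = \overline{B \xi_i B}$ --- but it extracts the violation of \eqref{Eqn=NonContraction} in a genuinely different asymptotic regime. The paper fixes $d$ once and for all with $q^2 d > 1$ and uses the $d^k$ Wick words $W_q(\xi_j)$, $j \in J_k$, of length $k \to \infty$: the identity \eqref{Eqn=Conclude} gives the lower bound $|q|^k d^k$, Nou's Khintchine-type estimate gives the upper bound $C_q^3 (k+1)^2 d^{k/2}$, and $(q^2 d)^{k/2}(k+1)^{-2} \to \infty$. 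You instead freeze the word length at $k=1$, taking $b_j = c_j = s_j = W_q(e_j)$, and let $d \to \infty$: your computation $s_j s_j^{\op} \xi_i = e_j \otimes \xi_i \otimes e_j + q \xi_i$ is precisely the $k=1$ instance of \eqref{Eqn=Conclude}, giving the ratio $d|q|$ against $C(q)\sqrt{d}$. This is more elementary: the permutation combinatorics behind \eqref{Eqn=Conclude} disappears, and from \cite{Nou} you only need the length-one inequality with dimension-free constant; indeed the cleanest way to finish is to quote the same estimate the paper quotes, specialized to $k=1$, which reads $\Vert \sum_{j=1}^d s_j \otimes s_j^{\op} \Vert_{B \minotimes B^{\op}} \leq 4 C_q^3 \sqrt{d}$. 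Conversely, the paper's regime uses dimension-freeness only through the factor $d^{k/2}$ for one fixed $d$, but needs the polynomial (rather than exponential) growth of Nou's constant in the word length $k$; the two proofs lean on different aspects of the same inequality.

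Two points in your self-contained derivation of the upper bound need care. First, when you split $s_j = l_q(e_j) + l_q^\ast(e_j)$ you must orient the row/column estimates correctly: for $\sum_j l_q(e_j) \otimes s_j^{\op}$ use the bound featuring $\Vert \sum_j l_q(e_j) l_q(e_j)^\ast \Vert^{1/2}$, but for $\sum_j l_q(e_j)^\ast \otimes s_j^{\op}$ you must use the column form $\Vert \sum_j a_j \otimes x_j \Vert \leq \Vert \sum_j a_j^\ast a_j \Vert^{1/2} \Vert \sum_j x_j x_j^\ast \Vert^{1/2}$, so that the Fock-space factor is again $\sum_j l_q(e_j) l_q(e_j)^\ast$; the wrong pairing produces $\Vert \sum_j l_q(e_j)^\ast l_q(e_j) \Vert^{1/2} \geq \sqrt{d}$ (test on the vacuum), which destroys the $\sqrt{d}$ bound. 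Second, the uniform bound $\Vert \sum_{j=1}^d l_q(e_j) l_q(e_j)^\ast \Vert \leq C(q)$ is true but not obvious, and your proposed value $(1-|q|)^{-1}$ is not justified as stated: for $0 < q < 1$, testing on the symmetrization of $e_1 \otimes \cdots \otimes e_k$ shows the norm is at least $(1-q^k)/(1-q)$, so such a bound would be sharp and requires proof. A clean route: on the $k$-particle space this operator equals $R_{1,k-1} (P_K \otimes 1) R_{1,k-1}^\ast$, where $R_{1,k-1}: H_q \otimes_2 H_q^{\otimes (k-1)} \rightarrow H_q^{\otimes k}$ is the identity map, $P_K$ the projection onto the complexification of $\RK$, and $\Vert R_{1,k-1} \Vert$ is bounded by a constant depending only on $q$ (Bo\.{z}ejko \cite{BozejkoUltra}; this is Eqn.~(2) in \cite{Nou}); alternatively, bypass the issue entirely by citing Nou's Theorem 2 at $k=1$ as above. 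With either repair your argument is complete.
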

\begin{proof}
Let $d \geq 2$ be so large that $q^2 d > 1$.  Let
\[
M := M_q(\mathbb{R}^d \oplus \RH), \quad A := A_q(\mathbb{R}^d \oplus \RH),\quad  B := A_q(\mathbb{R}^d \oplus 0).
\]
We shall prove that $M$ does not have AO with respect to $A$; since $\mathbb{R}^d \oplus \RH \simeq \RH$ this suffices to conclude the proof.

Let $\{ f_i \}_i$ be an orthonormal basis of $0 \oplus \RH$. Let $H_{q,i} := \overline{ B f_i B }^{\Vert \cdot \Vert}$ as a closed subspace of the Fock space $F_q(\mathbb{R}^d \oplus \RH)$. Then $H_{q,i} \perp H_{q,j}$ if $i \not = j$ which can be seen straight from the definition of $\langle \cdot, \cdot \rangle_q$. For $k \in \mathbb{N}$ let
\[
\cB(k)  = \{ W_q(\xi) \mid \xi \in (\mathbb{R}^d \oplus 0)^{\otimes k} \}.
\]

Let $\xi, \eta \in (\mathbb{R}^d \oplus 0)^{\otimes k}$ and write $\xi = \xi_1 \otimes \ldots \otimes \xi_k$ with $\xi_i \in \mathbb{R}^ d$.  We have $W_q(\xi)^\ast = W_q(\xi^\ast)$ where $\xi^\ast = \xi_k \otimes \ldots \otimes \xi_1$. We have that
(see \cite{EffrosPopa}),
\[
\langle W_q(\xi) f_i W_q(\eta), f_i \rangle_q
= \langle  f_i W_q(\eta) ,  W_q(\xi)^\ast f_i \rangle_q
 = \langle f_i \otimes \eta, \xi^\ast \otimes f_i \rangle_q = \langle P_q^{k+1} f_i \otimes \eta, \xi^\ast \otimes f_i \rangle.
\]
We examine the right hand side of this expression. The $q$-symmetrization operator $P_q^{k+1}$ is defined as a sum of permutations $\sigma \in S_{k+1}$ (see \eqref{Eqn=Pq}) and it follows from the fact that $f_i \in 0 \oplus \RH$ and $\xi, \eta \in (\mathbb{R}^d \oplus 0)^{\otimes k}$ that the only summands that contribute a possibly non-zero term are the ones where $\sigma(k+1) = 1$. Note that for such a permutation $\sigma$ we have 
\[
i(\sigma) = \# \left( \{ (a, k+1)  \mid 1 \leq a \leq k \} \cup  \{ (a, b)  \mid 1 \leq a <  b \leq k, \sigma(b) < \sigma(a) \} \right).
\]
 Therefore we find,
\begin{equation}\label{Eqn=Conclude}
\begin{split}
\langle W_q(\xi) f_i W_q(\eta), f_i \rangle_q =  &  \sum_{\sigma \in S_k}  q^{k + i(\sigma)} \langle  \eta_{\sigma(1)} \otimes \ldots \otimes \eta_{\sigma(k)}, \xi_k \otimes \ldots \otimes \xi_1  \rangle\\
 = &  q^k \langle P_q^{k} \eta, \xi^\ast  \rangle = q^k \langle \eta, \xi^\ast \rangle_q = q^k \langle W_q(\xi)   \Omega W_q(\eta),  \Omega \rangle_q.
\end{split}
\end{equation}
Now from \eqref{Eqn=Conclude}  we conclude that for $b_j, c_j \in \cB(k)$,
\begin{equation}\label{Eqn=LowerEstimate}
\Vert \sum_j b_j c_j^{\op} \Vert_{\cB(H_{q,i})} \geq
 \vert \langle  \sum_j b_j c_j^{\op}  f_i, f_i  \rangle_q \vert
=
 \vert \sum_j \langle b_j f_i c_j, f_i \rangle_q \vert =
 \vert  \sum_j q^k \langle b_j \Omega c_j, \Omega \rangle_q \vert.
\end{equation}

Now let $\{ e_1, \ldots, e_d \} $ be an orthonormal basis of $\mathbb{R}^d \oplus 0$ and for $j = (j_1, \ldots, j_k) \in \{ 1,\ldots, d \}^k$ let $e_j = e_{j_1} \otimes \ldots \otimes e_{j_k}$.  Let $J_k$ be the set of all such multi-indices of length $k$. So $\# J_k = d^ k$.  Set $\xi_j = (P^k_q)^{- \frac{1}{2} } e_j$ so that $\langle \xi_j, \xi_j \rangle_q = \langle P_q^k \xi_j, \xi_j \rangle = 1$.

Now \eqref{Eqn=LowerEstimate} yields that for all $k \geq 1$ and all $i$,
\[
\begin{split}
\Vert \sum_{j \in J_k}  W_q(\xi_j)^\ast    W_q(\xi_j)^{\op} \Vert_{\cB(H_{q,i})} \geq &  \sum_{j \in J_k} q^k \langle W_q(\xi_j)^\ast \Omega W_q(\xi_j), \Omega \rangle_q =
\sum_{j \in J_k}  q^k \langle \Omega W_q(\xi_j),  W_q(\xi_j) \Omega \rangle_q\\
 = & \sum_{j \in J_k}  q^k  \langle \xi_j,  \xi_j \rangle_q =    q^k d^{k}.
\end{split}
\]
On the other hand from \cite[Proof of Theorem 2]{Nou} we find,
\[
\Vert \sum_{j \in J_k} W_q(\xi_j)^\ast \otimes  W_q(\xi_j)^{\op} \Vert_{B \minotimes B^{\op}} \leq C_q^3 (k+1)^2  d^{k/2},
\]
where the constant $C_q >0$ was defined in \eqref{Eqn=Cq}.
Therefore, as  $q^2 d  > 1$  there exists $\delta > 0$ such that for  $k$ large enough we have for every $i$,
\[
\Vert \sum_{j \in J_k} W_q(\xi_j)^\ast    W_q(\xi_j)^{\op} \Vert_{\cB(H_{q,i} )}  \geq   (1+ \delta)    \Vert \sum_{j \in J_k} W_q(\xi_j)^\ast \otimes  W_q(\xi_j)^{\op} \Vert_{B \minotimes B^{\op}}.
\]
Hence the assumptions of Theorem \ref{Thm=AntiAO} are witnessed which shows that AO does not hold.
\end{proof}

\section{A non-isomorphism result for  $q$-Gaussian C$^\ast$-algebras}\label{Sect=NonIso}
We now turn to the isomorphism question of $A_q(\RH)$ for $q$ close to 0.
 We first need a result of independent interest which seems not to be proved in the literature. By \cite{Ricard} we know that the von Neumann algebra $M_q(\RH)$ with $\dim(\RH) \geq 2$  is a factor of type II$_1$. This was proven already in the case $\dim(\RH) = \infty$ in \cite[Theorem 2.10]{BKS}. In this section we need a strengthening of the latter result, namely that $A_q(\RH)$ has a unique tracial state. The proof is based again on Nou's Khintchine inequality \cite{Nou}.

\begin{thm}\label{Thm=Simple}
Let $\dim(\RH) = \infty$. Then $A_q(\RH)$ has a unique tracial state and is a simple C$^\ast$-algebra.
\end{thm}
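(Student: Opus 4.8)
The plan is to deduce both assertions from a single Dixmier/Powers-type averaging property: for every $x \in \cA_q(\RH)$ and every $\ep > 0$ there exist unitaries $u_1, \dots, u_N \in A_q(\RH)$ with
\[
\Big\Vert \frac{1}{N} \sum_{n=1}^N u_n x u_n^\ast - \tau_\Omega(x) 1 \Big\Vert < \ep .
\]
Granting this (on the dense $\ast$-subalgebra $\cA_q(\RH)$), uniqueness of the trace is immediate: if $\tau$ is any tracial state then $\tau(u_n x u_n^\ast) = \tau(x)$ for each unitary $u_n$, so $\vert \tau(x) - \tau_\Omega(x) \vert = \vert \tau(\frac{1}{N}\sum_n u_n x u_n^\ast - \tau_\Omega(x)1) \vert < \ep$, whence $\tau = \tau_\Omega$ on $\cA_q(\RH)$ and, by norm-continuity of traces, on all of $A_q(\RH)$. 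Simplicity follows by the same averaging: given a nonzero closed two-sided ideal $I$, choose $0 \neq y = y^\ast y \in I$, so $\tau_\Omega(y) > 0$ by faithfulness of $\tau_\Omega$; approximating $y$ in norm by a self-adjoint element of $\cA_q(\RH)$ and averaging produces an element of $I$ arbitrarily close to the strictly positive scalar $\tau_\Omega(y) 1$, so $I$ contains an invertible element and therefore $I = A_q(\RH)$.

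By linearity and density it suffices to establish the averaging property when $x = W_q(\xi)$ is a single Wick operator with $\xi \in H^{\otimes k}$. For $k = 0$ the element equals the scalar $\tau_\Omega(x) 1$ and there is nothing to do, while for $k \geq 1$ one has $\tau_\Omega(W_q(\xi)) = \langle \xi, \Omega \rangle = 0$, so the target of the averaging is $0$. The problem thus reduces to producing, for a fixed Wick word of degree $k \geq 1$, unitaries making $\Vert \frac{1}{N}\sum_n u_n W_q(\xi) u_n^\ast \Vert$ arbitrarily small.

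The core of the argument exploits $\dim(\RH) = \infty$. The support of $\xi$ lies in a finite-dimensional subspace $\RK \subseteq \RH$, and the aim is to choose unitaries $u_n \in A_q(\RH)$ that translate this support into subspaces which become asymptotically orthogonal as $n$ grows, so that the conjugates $u_n W_q(\xi) u_n^\ast$ are, up to a norm-small correction, Wick words $W_q(\eta_n)$ with the $\eta_n$ supported in mutually orthogonal components of $F_q(H)$. For such a family the decisive estimate is exactly the row/column Khintchine inequality of Nou already invoked in the proof of Theorem \ref{Thm=AntiAOGaussian}:
\[
\Big\Vert \sum_{n=1}^N W_q(\eta_n) \Big\Vert \leq C_q^3 (k+1)^2 \sqrt{N}\, \max_n \Vert \eta_n \Vert_q ,
\]
which gives $\Vert \frac{1}{N}\sum_n u_n W_q(\xi) u_n^\ast \Vert = O(N^{-1/2}) \to 0$. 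I expect the genuine difficulty to lie precisely in this step. Since the second-quantised (Bogoliubov) automorphisms that would rotate $\RK$ away from itself are outer, one cannot implement the spreading through them; instead one must build honest unitaries inside $A_q(\RH)$ -- in the spirit of Powers' dispersal of supports by conjugation with powers of a generator for $C^\ast_r(\mathbb{F}_2)$ -- and simultaneously control the orthogonality of the translated supports and the size of the error terms arising from the $q$-deformed multiplication. It is exactly for bounding these sums and correction terms that Nou's Khintchine inequality \cite{Nou} is the essential tool.

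Assembling the three steps, the averaging property holds on the dense $\ast$-subalgebra $\cA_q(\RH)$, and the standard approximation arguments indicated above then yield both the uniqueness of the tracial state and the simplicity of $A_q(\RH)$.
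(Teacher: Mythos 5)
Your proposal has a genuine gap at exactly the point you flag yourself: the unitaries $u_1,\dots,u_N$ that are supposed to disperse the support of a Wick word are never constructed, and nothing in the proposal indicates how to construct them. This is not a routine detail that can be deferred. The algebra $A_q(\RH)$ has no evident supply of unitaries implementing ``translations'' of supports: the generators $W_q(e_i)=l_q(e_i)+l_q^\ast(e_i)$ are self-adjoint but not unitary, and, as you note, the Bogoliubov rotations that would move a finite-dimensional subspace $\RK$ off itself are not inner. Moreover, even granting some unitary $u$ built from Wick words, the conjugate $u\,W_q(\xi)\,u^\ast$ is not of the form $W_q(\eta)$ plus a small error: the $q$-commutation relations produce contraction terms of all lower degrees, and controlling these is precisely the hard analysis your sketch replaces by the phrase ``up to a norm-small correction.'' So the argument as written is a reduction of the theorem to an unproved (and, as far as I know, unestablished) Powers-type dispersal property for $q$-Gaussian algebras.

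The paper avoids unitaries altogether, and this is the key idea you are missing. It averages with the generators themselves: $\Phi_m(X)=\frac{1}{m}\sum_{i=1}^m W_q(e_i)\,X\,W_q(e_i)$, where $\{e_i\}$ is an orthonormal basis of $\RH$. By the Effros--Popa Wick product formula, for a Wick word $X=W_q(\xi)$ of degree $n$ one gets
\[
\Phi_m(W_q(\xi)) \;=\; \frac{1}{m}\sum_{i\le d} W_q(e_i)W_q(\xi)W_q(e_i) \;+\; \frac{m-d}{m}\,q^n\,W_q(\xi)\;+\;\frac{1}{m}\sum_{i>d} W_q(e_i\otimes\xi\otimes e_i),
\]
and the last sum has norm $O(m^{-1/2})$ by Nou's Khintchine inequality applied to the pairwise orthogonal vectors $e_i\otimes\xi\otimes e_i$ (this is where infinite-dimensionality of $\RH$ enters, and it is the only place a Khintchine-type estimate is needed). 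Hence $\Phi_m(W_q(\xi))\to q^n W_q(\xi)$ in norm. Traciality is then used not through unitary invariance but through the identity $\tau(W_q(e_i)XW_q(e_i))=\tau(X\,W_q(e_i)^2)$, which gives $\tau(\Phi(X))=\tau(X\Phi(1))=\tau(X)$ for every tracial state $\tau$; iterating $\Phi$ yields $\tau(W_q(\xi))=q^{kn}\tau(W_q(\xi))\to 0$ for $n\ge 1$, so $\tau=\tau_\Omega$. Simplicity follows from the same lemma (as in Bo\.{z}ejko--K\"ummerer--Speicher). Your overall architecture (averaging forces unique trace and simplicity) is sound, but the contraction mechanism in the correct proof is the factor $q^n$ coming from the $q$-relations, not asymptotic orthogonality of conjugated supports, and the averaging elements need not be unitary for the trace argument to work.
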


We will prove the theorem after first proving a lemma.
 Assume for simplicity that $\RH$ is separable. Let $\{ e_i \}_{i = 1}^\infty$ be an orthonormal basis of $\RH$ and identify $\mathbb{R}^d$ with the span of $\{ e_i \}_{i=1}^d$. For $m \in \mathbb{N}$ consider the map $\cA_q(\RH) \rightarrow \cA_q(\RH)$ given by
\[
\Phi_m(X) =    \frac{1}{m} \sum_{i=1}^m W_q(e_i) X W_q(e_i).
\]
Then $\Phi_m$ extends to a bounded map $A_q(\RH) \rightarrow A_q(\RH)$ with bound uniform in $m$.

The following lemma is stronger than \cite[Theorem 2.10, proof]{BKS} where only weak convergence was established; the result is used in the proof of \cite[Theorem 2.14]{BKS} but its proof is not given.  Therefore we give it here.

\begin{lem} \label{Lem=NormConvergence}
For $X = W_q(\xi), \xi \in H^{\otimes n}$  we have $\Phi_m(X) \rightarrow q^n X$ as $m \rightarrow \infty$ in the norm of $A_q(\RH)$.
\end{lem}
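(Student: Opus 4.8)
The plan is to reduce to an explicit computation of $\Phi_m(W_q(\xi))$ applied to a generating tensor, and then control the error term in norm using Nou's Khintchine-type estimate. Since both $\Phi_m$ and the target map $X \mapsto q^n X$ are uniformly bounded on $A_q(\RH)$, and since Wick operators $W_q(\xi)$ with $\xi$ in a dense subspace of $H^{\otimes n}$ span a dense subset, it suffices by a standard $3\varepsilon$-argument to treat simple tensors $\xi = \xi_1 \otimes \cdots \otimes \xi_n$ with each $\xi_r$ drawn from a fixed orthonormal family; I would in fact take $\xi_r \in \{e_i\}_i$ so that everything is expressed combinatorially.

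First I would compute $\Phi_m(W_q(\xi))$ on the vacuum and, more importantly, as a Wick operator. The operator $W_q(e_i) W_q(\xi) W_q(e_i)$ creates the tensor $e_i \otimes \xi \otimes e_i$ up to lower-order contraction terms coming from the annihilation parts of the outer Wick operators. I expect the ``diagonal'' contribution — where the two copies of $e_i$ pair with each other through the $q$-deformed inner product — to produce precisely the factor $q^n$ after averaging $\frac{1}{m}\sum_{i=1}^m$, by the same inversion-counting that yielded equation~\eqref{Eqn=Conclude} in the proof of Theorem~\ref{Thm=AntiAOGaussian} (there the creation $e_i$ had to jump past $n$ legs, contributing $q^n$). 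So I would write
\[
\Phi_m(W_q(\xi)) = q^n W_q(\xi) + R_m,
\]
where $R_m$ is a sum over $i$ of Wick operators on tensors of length $n+2$ (the genuinely creating terms $e_i \otimes \xi \otimes e_i$) together with cross terms where only one $e_i$ contracts.

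The heart of the argument, and the step I expect to be the main obstacle, is showing $\|R_m\|_{A_q(\RH)} \to 0$. The length-$(n+2)$ piece is $\frac{1}{m}\sum_{i=1}^m W_q(e_i \otimes \xi \otimes e_i)$, and naive norm estimates on individual Wick operators are of size $O(1)$, so summing $m$ of them and dividing by $m$ gives no decay pointwise; the cancellation must come from orthogonality of the $e_i$. This is exactly where Nou's Khintchine inequality enters: the vectors $e_i \otimes \xi \otimes e_i$, $i=1,\dots,m$, are mutually orthogonal in $H_q^{\otimes(n+2)}$, so the operator norm of their average is controlled not by the sum of norms but by a term of order $m^{-1/2}$ (up to the polynomial-in-$(n+2)$ and $C_q$ factors appearing in the bound used in the proof of Theorem~\ref{Thm=AntiAOGaussian}). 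Concretely I would invoke the estimate of the form
\[
\Bigl\| \tfrac{1}{m}\sum_{i=1}^m W_q(e_i \otimes \xi \otimes e_i) \Bigr\|_{A_q(\RH)} \leq \frac{C_q\,(n+3)}{m}\, \Bigl\|\sum_i (e_i\otimes\xi\otimes e_i)\Bigr\|_{H_q^{\otimes(n+2)}} \sim \frac{C}{\sqrt{m}},
\]
which vanishes as $m\to\infty$; the cross terms of length $n$ are handled identically, being averages of orthogonal families scaled by $1/m$.

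Finally I would assemble the pieces: the main term is $q^n W_q(\xi)$ exactly, the remainder $R_m$ splits into finitely many families each of which is an $\frac{1}{m}$-scaled sum of Wick operators on pairwise-orthogonal tensors, and each such family has norm $O(m^{-1/2})$ by the Khintchine estimate, so $\Phi_m(W_q(\xi)) \to q^n W_q(\xi)$ in norm. The delicate bookkeeping is the precise identification of all contraction terms in $W_q(e_i) W_q(\xi) W_q(e_i)$ and verifying that each surviving family indeed consists of orthogonal tensors so that the square-root cancellation applies; this is routine once the Wick calculus is written out carefully but is the one place where an error in the combinatorics would break the decay.
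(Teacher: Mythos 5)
Your proposal is correct and follows essentially the same route as the paper: reduce by density and uniform boundedness of $\Phi_m$ to $\xi$ in a finite-dimensional piece, split $\Phi_m(W_q(\xi))$ into the $q^n W_q(\xi)$ part plus a remainder of Wick operators on the length-$(n+2)$ tensors $e_i \otimes \xi \otimes e_i$, and kill the remainder via Nou's Khintchine inequality plus orthogonality of these tensors in $H_q^{\otimes (n+2)}$, giving the $O(m^{-1/2})$ decay. The ``delicate bookkeeping'' you worry about is handled in the paper by citing the Wick product formula of Effros--Popa, which for $e_i$ orthogonal to all components of $\xi$ gives exactly $W_q(e_i)W_q(\xi)W_q(e_i) = q^n W_q(\xi) + W_q(e_i \otimes \xi \otimes e_i)$ with no cross terms, the finitely many indices $i$ overlapping with $\xi$ being wiped out trivially by the $1/m$ average.
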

\begin{proof}
First assume that there exists $d \in \mathbb{N}$  such that $\xi \in (\mathbb{C}^{d})^{\otimes n} \subseteq H^{\otimes n}$. By density and uniform boundedness of $\Phi_m$ in $m$ this suffices to conclude the lemma.
 Then, for $m > d$, by \cite[Theorem 3.3]{EffrosPopa},
\begin{equation}\label{Eqn=ThreeSummands}
\Phi_m( W_q(\xi) ) =   \frac{1}{m} \sum_{i=1}^d W_q(e_i) W_q(\xi) W_q(e_i) +   \frac{1}{m} \sum_{i=d+1}^m q^n  W_q(\xi)  +   \frac{1}{m} \sum_{i=d+1}^m  W_q(e_i \otimes \xi \otimes e_i).
\end{equation}
The first term converges to 0 as $m \rightarrow \infty$, whereas the second term converges to $q^n W_q(\xi)$. It thus remains to show that the last term converges to 0 in norm.
We have by \cite[Lemma 2]{Nou} (see also \cite{BozejkoUltra} where a weaker but sufficient estimate was obtained),
\[
\Vert \sum_{i=d+1}^m  W_q(e_i \otimes \xi \otimes e_i) \Vert \leq
(n+3)
C_q^{\frac{3}{2}}
\Vert \sum_{i=d+1}^m   e_i \otimes \xi \otimes e_i  \Vert_{ H_q^{\otimes n+2} }.
\]
The vectors  $\{ e_i \otimes \xi \otimes e_i \}_{i}$ are orthogonal in   $H_q^{\otimes n+2}$  and have the same norm which we denote by $C$. Therefore,
\[
\frac{1}{m}  \Vert \sum_{i=d+1}^m  W_q(e_i \otimes \xi \otimes e_i) \Vert \leq
(n+3)
C_q^{\frac{3}{2}}  C m^{-\frac{1}{2}}.
\]
 We conclude that the third term in \eqref{Eqn=ThreeSummands} converges to 0 as $m \rightarrow \infty$ in norm.
\end{proof}

\begin{proof}[Proof of Theorem \ref{Thm=Simple}]
By Lemma \ref{Lem=NormConvergence} for $X \in \cA_q(\RH)$ set the norm limit $\Phi(X) := \lim_{m \rightarrow \infty} \Phi_m(X)$.
Let $\tau$ be any tracial state on $A_q(\RH)$. Then,  for $X \in \cA_q(\RH)$,
\[
\begin{split}
\tau( \Phi(X) ) = & \lim_{m \rightarrow \infty}  \tau(  \Phi_m(X)) =
\lim_{m \rightarrow \infty}   \frac{1}{m}  \sum_{i=1}^m   \tau( W_q(e_i) X W_q(e_i)   ) \\
 = &
\lim_{m \rightarrow \infty}   \frac{1}{m}  \sum_{i=1}^m   \tau( X W_q(e_i)   W_q(e_i)  )
= \tau(X \Phi(1)) = \tau(X).
\end{split}
\]
Therefore, by  Lemma \ref{Lem=NormConvergence},  $\tau(W_q(\xi)) =   \tau( \Phi^k(W_q(\xi)) ) = q^{kn} \tau(W_q(\xi))$ for  $\xi \in H^{\otimes n}$. For $k \rightarrow \infty$ the expression converges to 0  for $n \geq 1$. It follows that for $X \in \cA_q(\RH)$ we have $\tau(X) = \tau_\Omega(X)$ and by continuity this actually holds for $X \in A_q(\RH)$. So $\tau_\Omega$ is the unique tracial state on $A_q(\RH)$.

Simplicity was already obtained in \cite[Theorem 2.14]{BKS}; it is also based on Lemma \ref{Lem=NormConvergence}.
\end{proof}

The following proposition was also proved in \cite[Chapter 4]{HoudayerThesis}; the proof uses the same method as \cite{Shlyakhtenko} where this result was also obtained for finite dimensional $\RH$.

 \begin{prop} \label{Prop=AO}
For any real Hilbert space $\RH$ the von Neumann algebra  $M_0(\RH)$ satisfies AO with respect to $A_0(\RH)$.
 \end{prop}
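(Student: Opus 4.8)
The plan is to work on the standard form $L_2(M_0(\RH)) = F_0(H)$, the full Fock space, on which $A_0(\RH)$ acts by left multiplication and $A_0(\RH)^{\op}$ acts by right multiplication through $J$. Write $\ell(\xi) := l_0(\xi)$ for the left creation operators and $r(\xi)$ for the analogous right creation operators, so that for $\xi \in \RH$ one has $J(\ell(\xi)+\ell(\xi)^\ast)J = r(\xi) + r(\xi)^\ast$. The first step is to record the two inclusions $A_0(\RH) \subseteq \mathcal{T}_\ell := C^\ast\{ \ell(\xi) : \xi \in H \}$ and $A_0(\RH)^{\op} \subseteq \mathcal{T}_r := C^\ast\{ r(\xi) : \xi \in H \}$, both immediate since $\ell(\xi), r(\xi)$ lie in the respective generated algebras.

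The key computation is that $\mathcal{T}_\ell$ and $\mathcal{T}_r$ commute modulo $\cK(F_0(H))$. On tensors of positive length the left and right creations and annihilations genuinely commute; the only defect occurs at the vacuum, where a direct check gives that $[\ell(\xi), r(\eta)] = 0$, $[\ell(\xi)^\ast, r(\eta)^\ast]=0$, and that $[\ell(\xi)^\ast, r(\eta)]$ and $[\ell(\xi), r(\eta)^\ast]$ are rank-one operators supported on $\C\Omega$. Thus every commutator of a generator of $\mathcal{T}_\ell$ with a generator of $\mathcal{T}_r$ is finite rank. Since for fixed $y$ the set $\{ x : [x,y] \in \cK \}$ is closed under adjoints and products (as $\cK$ is an ideal and $[xy,z] = x[y,z] + [x,z]y$), a two-step bootstrap promotes this to $[\mathcal{T}_\ell, \mathcal{T}_r] \subseteq \cK(F_0(H))$. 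Writing $q: \cB(F_0(H)) \to \cB(F_0(H))/\cK(F_0(H))$ for the quotient onto the Calkin algebra, it follows that the images $q(\mathcal{T}_\ell)$ and $q(\mathcal{T}_r)$ have commuting ranges.

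Next I would identify $q(\mathcal{T}_\ell)$. For an orthonormal basis $\{e_i\}$ of $H$ the operators $\ell(e_i)$ are isometries with mutually orthogonal ranges, so $\mathcal{T}_\ell$ is a Cuntz--Toeplitz algebra and its Calkin image $q(\mathcal{T}_\ell)$ is a Cuntz algebra: it is $\mathcal{O}_{\dim H}$ when $\dim H$ is finite and $\geq 2$, it is $\mathcal{O}_\infty$ when $\dim H = \infty$, and it is $C(\mathbb{T})$ when $\dim H = 1$. In every case $q(\mathcal{T}_\ell)$ is nuclear. Then, because $q(\mathcal{T}_\ell)$ is nuclear and its range commutes with that of $q(\mathcal{T}_r)$, the multiplication map $q(\mathcal{T}_\ell) \otimes_{\max} q(\mathcal{T}_r) \to \cB(F_0(H))/\cK(F_0(H))$ is a well-defined $\ast$-homomorphism, and nuclearity of the left factor gives $\otimes_{\max} = \minotimes$ there. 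Precomposing with the functorial $\ast$-homomorphism $A_0(\RH) \minotimes A_0(\RH)^{\op} \to q(\mathcal{T}_\ell) \minotimes q(\mathcal{T}_r)$ induced by $q$ recovers exactly the map $\theta$ of Definition \ref{Dfn=AO}, now manifestly $\minotimes$-bounded. For local reflexivity I would simply note that $A_0(\RH)$ is a C$^\ast$-subalgebra of the nuclear (hence exact) algebra $\mathcal{T}_\ell$, so it is exact, and exact C$^\ast$-algebras are locally reflexive.

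The routine parts are the commutator computation and the functoriality of $\minotimes$. The main obstacle is the clean identification of the Calkin image $q(\mathcal{T}_\ell)$ as a nuclear Cuntz-type algebra \emph{uniformly} in $\dim H$: the finite- and infinite-dimensional cases behave differently, since for $\dim H = \infty$ the algebra $\mathcal{T}_\ell \cong \mathcal{O}_\infty$ already contains no nonzero compact operators (so $q$ is injective on it), whereas for finite $\dim H$ one genuinely passes to the Cuntz quotient $\mathcal{T}_\ell/\cK$. This is precisely the point where Shlyakhtenko's finite-dimensional argument must be adapted, and it is the step that deserves the most care.
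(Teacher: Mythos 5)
Your proof is correct and is essentially the same argument the paper relies on: the paper defers to Houdayer's thesis and Shlyakhtenko's method, which is exactly this scheme of showing that the left and right Cuntz--Toeplitz algebras commute modulo $\cK(F_0(H))$ (rank-one defects at the vacuum, then a bootstrap) and then using nuclearity to pass from $\otimes_{\max}$ to $\minotimes$ on the commuting Calkin images. One remark: the step you single out as the main obstacle --- identifying $q(\mathcal{T}_\ell)$ as a concrete Cuntz-type algebra uniformly in $\dim H$ --- can be bypassed entirely, since $\mathcal{T}_\ell$ is nuclear for every $\dim H$ (it is a direct limit of finitely generated Cuntz--Toeplitz algebras, each an extension of a Cuntz algebra by the compacts) and nuclearity passes to quotients, so $q(\mathcal{T}_\ell)$ is nuclear without ever determining its isomorphism class.
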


\begin{thm}\label{Thm=Main}
Let $\RH$ be a real Hilbert space with $\dim(\RH) = \infty$. Then $A_q(\RH)$ with $-1 < q <  1, q \not = 0$ is not isomorphic to $A_0(\RH)$ and neither to $A_{q'}(\mathbb{R}^d)$ with $\vert q' \vert <  \sqrt{2} -1$ or $\vert q' \vert \leq  d^{- \frac{1}{2}}$.
  \end{thm}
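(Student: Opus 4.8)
The plan is to deduce the non-isomorphism statements from the failure of the Akemann--Ostrand property, combined with the positive result for the free case and the behaviour of AO under isomorphism. First I would record the key structural fact: the Akemann--Ostrand property of a finite von Neumann algebra \emph{with respect to} a given $\sigma$-weakly dense C$^\ast$-subalgebra is really a property of the C$^\ast$-algebra together with its embedding into the standard form. Concretely, if $\phi\colon A_q(\RH)\to A_0(\RH)$ were a $\ast$-isomorphism, it would extend to a $\ast$-isomorphism $M_q(\RH)\to M_0(\RH)$ of the generated von Neumann algebras, since $\tau_\Omega$ is the \emph{unique} tracial state on each C$^\ast$-algebra by Theorem~\ref{Thm=Simple}, so $\phi$ is automatically trace-preserving and hence implements a unitary $U\colon L_2(M_q(\RH))\to L_2(M_0(\RH))$ intertwining the standard representations, the conjugations $J$, and sending compacts to compacts. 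Under such a unitary the multiplication map and its image modulo $\mathcal{K}$ transport exactly, so $M_0(\RH)$ would have AO with respect to $A_0(\RH)$ if and only if $M_q(\RH)$ has AO with respect to $A_q(\RH)$.

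Next I would assemble the contradiction. By Proposition~\ref{Prop=AO} the free algebra $M_0(\RH)$ \emph{does} satisfy AO with respect to $A_0(\RH)$, whereas by Theorem~\ref{Thm=AntiAOGaussian} the algebra $M_q(\RH)$ \emph{fails} AO with respect to $A_q(\RH)$ whenever $\dim(\RH)=\infty$ and $q\neq 0$. Since AO-with-respect-to-the-canonical-subalgebra is preserved under trace-preserving $\ast$-isomorphism of the pair $(A,M)$ by the previous paragraph, these two facts are incompatible, so no isomorphism $A_q(\RH)\simeq A_0(\RH)$ can exist.

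For the finite-dimensional comparisons $A_{q'}(\mathbb{R}^d)$ I would argue similarly but invoke a dimension/rigidity obstruction rather than AO directly. The point is that $A_{q'}(\mathbb{R}^d)$ with either $\vert q'\vert<\sqrt{2}-1$ or $\vert q'\vert\le d^{-1/2}$ is, by the Guionnet--Shlyakhtenko free transport results \cite{GS} (and the conjugate-variable estimates of \cite{Dabrowski}), isomorphic to the \emph{free} algebra $A_0(\mathbb{R}^d)$; the two stated inequalities are exactly the ranges in which that free-transport isomorphism is known to hold for $d$ generators. I would then observe that $A_0(\mathbb{R}^d)$ and the infinite-variable free algebra $A_0(\RH)$ both satisfy AO by Proposition~\ref{Prop=AO}, so any isomorphism $A_q(\RH)\simeq A_{q'}(\mathbb{R}^d)$ would chain through $A_0$ to give AO for $M_q(\RH)$, again contradicting Theorem~\ref{Thm=AntiAOGaussian}. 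Thus $A_q(\RH)$ is distinguished from all of these free-equivalent algebras.

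The main obstacle I anticipate is the transport-of-AO step: verifying carefully that a $\ast$-isomorphism of the C$^\ast$-algebras genuinely upgrades to a \emph{spatial} isomorphism of the standard forms carrying compacts to compacts and the map $\theta$ to $\theta$. This is where Theorem~\ref{Thm=Simple} is essential, since uniqueness of the trace forces $\phi$ to be $\tau$-preserving (so that the GNS/standard unitary $U$ exists and intertwines $J$), and one must check that conjugation by $U$ sends $\mathcal{K}(L_2(M_q(\RH)))$ onto $\mathcal{K}(L_2(M_0(\RH)))$ and is a $\ast$-isomorphism $A_q\minotimes A_q^{\op}\to A_0\minotimes A_0^{\op}$, so that continuity of one multiplication map modulo compacts is equivalent to continuity of the other. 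The finite-dimensional statements carry the additional burden of pinning down precisely which ranges of $q'$ the free-transport isomorphism covers, but modulo citing \cite{GS} and \cite{Dabrowski} this is bookkeeping rather than a genuine difficulty.
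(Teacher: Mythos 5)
Your argument for distinguishing $A_q(\RH)$ from $A_0(\RH)$ is correct and is essentially the paper's proof: uniqueness of the trace (Theorem \ref{Thm=Simple}) forces any $\ast$-isomorphism of the C$^\ast$-algebras to be trace-preserving, hence to upgrade to an isomorphism of the pairs $(M,A)$ carrying the standard form, the compacts and the map $\theta$ along (the paper cites \cite[Lemma 1.1]{CKL} for exactly this), and then Proposition \ref{Prop=AO} versus Theorem \ref{Thm=AntiAOGaussian} gives the contradiction. Note that in this step it suffices to have unique trace on \emph{one} side (the infinite-dimensional one), which matters below because Theorem \ref{Thm=Simple} is only stated for $\dim(\RH)=\infty$.

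The finite-dimensional part, however, has a genuine gap. You claim that $\vert q'\vert < \sqrt{2}-1$ and $\vert q'\vert \leq d^{-1/2}$ ``are exactly the ranges in which the free-transport isomorphism is known to hold for $d$ generators,'' and you route the argument through $A_{q'}(\mathbb{R}^d)\simeq A_0(\mathbb{R}^d)$ via \cite{GS}. This is false: the Guionnet--Shlyakhtenko free transport range is dimension-dependent and \emph{shrinks} as $d$ grows (as the paper's introduction recalls), so for large $d$ it covers neither the dimension-free bound $\sqrt{2}-1\approx 0.414$ nor $d^{-1/2}$; the condition $q'^2 d<1$ from \cite{Dabrowski} only gives existence of conjugate variables, not transport. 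The paper does not go through an isomorphism with the free algebra at all; instead it invokes the fact that the pair $(M_{q'}(\mathbb{R}^d), A_{q'}(\mathbb{R}^d))$ itself satisfies AO$^+$ directly in those ranges --- by \cite{CIW} for $\vert q'\vert<\sqrt{2}-1$ and by \cite{Shlyakhtenko} for $\vert q'\vert\leq d^{-\frac{1}{2}}$ --- and then concludes exactly as in the $q=0$ case, using Theorem \ref{Thm=AntiAOGaussian} together with the unique-trace transport of the pair. So to repair your proof you should replace the free-transport detour by these direct AO$^+$ results; as written, your argument only establishes non-isomorphism with $A_{q'}(\mathbb{R}^d)$ for the much smaller (and $d$-dependent) range where \cite{GS} actually applies.
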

  \begin{proof}
  If $A_q(\RH)$ were to be isomorphic to $A_{q'}(\mathbb{R}^d)$ then the unique trace property of Theorem \ref{Thm=Simple} shows that the pair $(M_q(\RH), A_q(\RH))$ is isomorphic to  $(M_{q'}(\mathbb{R}^d), A_{q'}(\mathbb{R}^d))$, see \cite[Lemma 1.1]{CKL} for the standard argument. However this is not the case by Theorem \ref{Thm=AntiAOGaussian} and the fact that $(M_{q'}(\mathbb{R}^d), A_{q'}(\mathbb{R}^d))$ has   AO by \cite{CIW}, \cite{Shlyakhtenko} (the property AO$^+$ in these references directly implies AO). The argument for the non-isomorphism of  $A_q(\RH)$ and  $A_0(\RH)$ is the same where we use Theorem \ref{Thm=AntiAOGaussian} and Proposition \ref{Prop=AO} instead.
  \end{proof}
 
\begin{rmk}
Fix a real Hilbert space $\RH$  with $\dim(\RH) < \infty$ and complexification $H$ as before.
We call the C$^\ast$-subalgebra of $\mathcal{B}(F_q(H))$ generated by $l_q(\xi), \xi \in H$ the $q$-CCR algebra.  Shortly after completion of this paper it was announced in \cite{Kuzmin} that, for $\RH$ fixed, all $q$-CCR algebras for $-1 < q < 1$ are isomorphic. In particular these C$^\ast$-algebras are nuclear. Following the proof of \cite[Theorem 4.2]{Shlyakhtenko} while using that $\RH$ is finite dimensional, it follows that  $(M_{q'}(\RH), A_{q'}(\RH))$ has AO for all $-1 < q' < 1$. Consequently, Theorem \ref{Thm=Main} holds for any $-1 < q' < 1$. This also completely classifies when $q$-Gaussian von Neumann algebras have AO with respect to the underlying $q$-Gaussian C$^\ast$-algebra.
\end{rmk} 

\begin{rmk}
In principle it is possible to give a purely C$^\ast$-algebraic proof of Theorem \ref{Thm=Main} as well by considering the following version of AO. We say that a C$^\ast$-algebra has C$^\ast$AO if it has a unique faithful tracial state $\tau$ and the map $A \otimes_{{\rm alg}} A^{\op} \rightarrow \cB(L_2(A, \tau)) \slash \mathcal{K}(L_2(A, \tau)): a \otimes b^\op \mapsto a b^\op + \mathcal{K}(L_2(A, \tau))$ is continuous for the minimal tensor norm. Here $L_2(A, \tau)$ is the GNS-space for $\tau$ and $b^\op$ the right multiplication with $b$.   This property distinguishes the algebras then.
\end{rmk}

\begin{rmk}
The question stays open whether for a real infinite dimensional Hilbert space $\RH$ one can distinguish the von Neumann algebra $M_0(\RH)$ from  $M_q(\RH)$ with $-1 < q <  1, q \not = 0$.
\end{rmk}

\end{document}